\let\doendproof\endproof
\renewcommand\endproof{~\hfill\qed\doendproof}
\newcommand{\pn}{\ensuremath{\operatorname{pn}}}
\newcommand{\sa}{\ensuremath{\operatorname{sa}}}
\newcommand{\cn}{\ensuremath{\operatorname{cn}}}
\newcommand{\arb}{\ensuremath{\operatorname{a}}}
\newcommand{\mad}{\ensuremath{\operatorname{mad}}}
\newcommand{\calP}{\ensuremath{\mathcal{P}}}
\let\leq\leqslant
\let\geq\geqslant
\let\subset\subseteq
\begin{document}

\title{Local and Union Page Numbers}

\author{
 {Laura Merker} \and 
 {Torsten Ueckerdt}
}


\institute{
 Karlsruhe Institute of Technology (KIT), 
 Institute of Theoretical Informatics, Germany\\
 \email{laura.merker@student.kit.edu}, \email{torsten.ueckerdt@kit.edu}
}

\maketitle

\begin{abstract}
 We introduce the novel concepts of local and union book embeddings, and, as the corresponding graph parameters, the local page number $\pn_\ell(G)$ and the union page number $\pn_u(G)$.
 Both parameters are relaxations of the classical page number $\pn(G)$, and for every graph $G$ we have $\pn_\ell(G) \leq \pn_u(G) \leq \pn(G)$.
 While for $\pn(G)$ one minimizes the total number of pages in a book embedding of $G$, for $\pn_\ell(G)$ we instead minimize the number of pages incident to any one vertex, and for $\pn_u(G)$ we instead minimize the size of a partition of $G$ with each part being a vertex-disjoint union of crossing-free subgraphs. 
 While $\pn_\ell(G)$ and $\pn_u(G)$ are always within a multiplicative factor of $4$, there is no bound on the classical page number $\pn(G)$ in terms of $\pn_\ell(G)$ or $\pn_u(G)$.
 
 We show that local and union page numbers are closer related to the graph's density, while for the classical page number the graph's global structure can play a much more decisive role.
 We introduce tools to investigate local and union book embeddings in exemplary considerations of the class of all planar graphs and the class of graphs of tree-width $k$.
 As an incentive to pursue research in this new direction, we offer a list of intriguing open problems.
 
 \keywords{Book embedding \and Page number \and Stack number \and Local covering number \and Planar graph \and Tree-width.}
\end{abstract}

\section{Introduction}
\label{sec:introduction}

A \emph{linear embedding} of a graph $G = (V,E)$ is a tuple $(\prec,\calP)$ where $\prec$ is a total ordering\footnote{We define $\prec$ as a linear ordering. However, in a few places we shall think of $\prec$ as a cyclic ordering. This is legitimate as we are interested in crossing edges only, and these are preserved under cyclic shifts.} of the vertex set $V$ and $\calP = \{P_1,\ldots,P_k\}$ is a partition of the edge set $E$.
The ordering $\prec$ is sometimes called the \emph{spine ordering}, and each part $P_i$ of $\calP$ is called a \emph{page}.
For a given spine ordering $\prec$, two edges $uv, xy \in E$ with $u \prec v$ and $u \prec x \prec y$ are said to be \emph{crossing} if $u \prec x \prec v \prec y$.
A linear embedding $(\prec,\calP)$ is a \emph{book embedding} if for any two edges $uv$ and $xy$ in $E$ we have
\begin{equation}
 \text{if } u \prec x \prec v \prec y \text{ and } uv \in P_i, xy \in P_j \text{ then } i \neq j.\label{eq:non-crossing}
\end{equation}
So \cref{eq:non-crossing} simply states that no two edges in the same page are crossing, or equivalently, any two crossing edges are assigned to distinct pages in $\calP$.

Book embeddings were introduced by Ollmann~\cite{Oll-73} as well as Bernhart and Kainen~\cite{BK-79}, see also~\cite{Kai-74}.
Besides their apparent applications in real-world problems (see e.g.~\cite{Ros-83,CLR-87} and the numerous references in~\cite{DW-04}), book embeddings enjoy steady popularity in graph theory; see for example~\cite{VWY-09,ENO-97,HI-92,TY-02,Mal-94,Yan-89}, just to name a few.
In most cases (also including the generalizations for directed graphs~\cite{BDDDMP-19} or pages with limited crossings~\cite{BDHL-18}), one seeks to find a book embedding with as few pages as possible for given graph $G$.
In particular, $(\prec,\calP)$ is a \emph{$k$-page book embedding} if $|\calP| = k$, and the \emph{page number} of $G$, denoted by $\pn(G)$, is the smallest $k$ for which we can find a $k$-page book embedding of $G$.
(We remark that $\pn(G)$ is sometimes also called the book thickness~\cite{BK-79} or stack number~\cite{DW-04} of $G$.)

As the main contribution of the present paper, we propose two relaxations of the page number parameter: The local page number $\pn_\ell(G)$ and the union page number $\pn_u(G)$.
We initiate the study of these parameters by comparing $\pn_\ell(G)$, $\pn_u(G)$, and $\pn(G)$ for graphs $G$ in some natural graph classes, such as planar graphs (c.f.~\cref{sec:planars}), graphs of bounded density (c.f.~\cref{sec:density}), and graphs of bounded tree-width (c.f.~\cref{sec:tree-width}).
Besides these bounds, a (perhaps not surprising) result showing computational hardness (c.f.~\cref{thm:hardness}), and a few structural observations, we also give some intriguing open problems at the end of the paper in \cref{sec:conclusions}.

Before listing our specific results in \cref{subsec:our-results} below, let us define and motivate the novel parameters local and union page numbers.

\medskip

\noindent
\textbf{Local Page Numbers.}\;
For a book embedding $(\prec,\calP)$ of graph $G = (V,E)$ and a vertex $v \in V$, let us denote by $\calP_v$ the subset of pages that contain at least one edge incident to $v$.
Then we define:
\begin{itemize}
 \item A book embedding is \emph{$k$-local} if $|\calP_v| \leq k$ for each $v \in V$, i.e., each vertex has incident edges on at most $k$ pages.
 
 \item The \emph{local page number}, denoted by $\pn_\ell(G)$, is the smallest $k$ for which we can find a $k$-local book embedding of $G$.
\end{itemize}
Thus, we seek to find a book embedding with any number of pages (possibly more than $\pn(G)$), but with no vertex having incident edges on more than $k$ of these pages.
As each $k$-page book embedding is a $k$-local book embedding,
\begin{equation}
 \text{for any graph } G \text{ we have } \qquad \pn_\ell(G) \leq \pn(G).
\end{equation}
However, $\pn_\ell(G)$ can be strictly smaller than $\pn(G)$.
For example, $K_5$ and $K_{3,3}$ both have page number $3$ and local page number $2$.
As illustrated in \cref{fig:K33-and-K5}, $K_5$ admits a $2$-local $3$-page book embedding, i.e., this book embedding simultaneously certifies $\pn(K_5) \leq 3$ and $\pn_\ell(K_5) \leq 2$.
In the left of \cref{fig:K33-and-K5} we have a $2$-local $4$-page book embedding of $K_{3,3}$ (when the three orange/thick edges are put into three separate pages).
So here, the introduction of ``extra'' pages, additionally to the necessary $\pn(K_{3,3}) = 3$ pages in every book embedding of $K_{3,3}$, allowed us to actually reduce the maximum number of pages incident to any one vertex from $3$ to $\pn_\ell(K_{3,3}) = 2$.
And for some graphs $G$ with $\pn_\ell(G) = k$, in fact all $k$-local book embeddings have more than $\pn(G)$ pages.

\begin{figure}[t]
 \centering
 \includegraphics{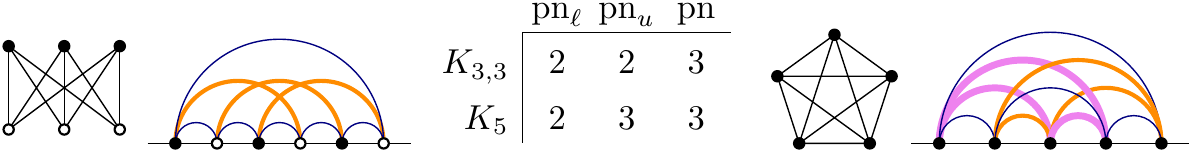}
 \caption{Comparison of local, union, and classical page numbers on the examples of $K_{3,3}$ and $K_5$.
 Left: $2$-page union embedding of $K_{3,3}$.
 Right: $2$-local book embedding of $K_5$.}
 \label{fig:K33-and-K5}
\end{figure}

\medskip

\noindent
\textbf{Union Page Numbers.}\;
For a linear embedding $(\prec,\calP)$ (so not necessarily a book embedding) of graph $G = (V,E)$ and a page $P \in \calP$, let us denote by $G_P$ the subgraph of $G$ on all edges in $P$ and all vertices with some incident edge in $P$.
Then we define:
\begin{itemize}
 \item A linear embedding $(\prec,\calP)$ is a \emph{union embedding} if $(\prec,\{E(C)\})$ is a ($1$-page) book embedding for each connected component $C$ of $G_P$ and each $P \in \calP$, i.e., each connected component of each page is crossing-free.
 
 \item The \emph{union page number}, denoted by $\pn_u(G)$, is the smallest $k$ for which we can find a $k$-page union embedding of $G$.
\end{itemize}
In other words, in a union embedding, each page is the vertex-disjoint union of crossing-free graphs; hence the name ``union page number''.
So we allow crossing edges on a single page $P$, as long as these are contained in different components of $G_P$.
For the union page number $\pn_u(G)$ we minimize the number of pages, just like for the classical page number $\pn(G)$.

Again, each $k$-page book embedding is also a $k$-page union embedding, giving $\pn_u(G) \leq \pn(G)$.
Moreover, each $k$-page union embedding can be transformed into a $k$-local book embedding by putting each component of each page onto a separate page, giving $\pn_\ell(G) \leq \pn_u(G)$.
Summarizing,
\begin{equation}
 \text{for any graph } G \text{ we have } \qquad \pn_\ell(G) \leq \pn_u(G) \leq \pn(G).\label{eq:inequalities}
\end{equation}
Consider again the linear embedding of $K_{3,3}$ in the left of \cref{fig:K33-and-K5}, but this time put all three orange/thick edges on the same page $P$.
These edges are pairwise crossing, so this is not a book embedding. 
However these edges lie in separate connected components of $G_P$, so this is a union embedding.
As we found a $2$-page union embedding of $K_{3,3}$, we see $\pn_u(K_{3,3}) \leq 2 < 3 = \pn(K_{3,3})$.

Comparing union and local page numbers, we have that $\pn_\ell(G)$ can be strictly smaller than $\pn_u(G)$.
For example, we have already seen in \cref{fig:K33-and-K5} that $\pn_\ell(K_5) \leq 2$, and we claim that $\pn_u(K_5) > 2$.
Indeed, for the cyclic spine ordering $v_1 \prec \cdots \prec v_5$ and pages $P_1,P_2$ we may assume by symmetry that  $v_1v_3, v_1v_4, v_2v_5 \in P_1$ and $v_2v_4, v_3v_5 \in P_2$.
As each connected component of $G_{P_1}$ and $G_{P_2}$ is crossing-free, $v_2$ and $v_3$ are in distinct components in both page $P_1$ and page $P_2$, leaving no way to assign the edge $v_2v_3$.

\medskip
\noindent
\textbf{Motivation.}\;
Local and union page numbers are motivated by local and union covering numbers as introduced by Knauer and the second author~\cite{KU-16}.
In order to give a brief summary of the covering number framework, consider a graph $H$ and a graph class $\mathcal{G}$.
An \emph{injective $\mathcal{G}$-cover of $H$} is a set $S = \{G_1,\ldots,G_m\}$ of subgraphs\footnote{In a general $\mathcal{G}$-cover one considers graph homomorphisms from graphs in $\mathcal{G}$ into $H$. However, we consider here only injective $\mathcal{G}$-covers, which is equivalent to considering subgraphs of $H$.} of $H$ such that $H = G_1 \cup \cdots \cup G_m$ and $G_i \in \mathcal{G}$ for $i=1,\ldots,m$.
In other words, $H$ is covered by (is the union of) some $m$ (possibly isomorphic) graphs from $\mathcal{G}$.
Moreover, let $\overline{\mathcal{G}}$ denote the class of all finite vertex-disjoint unions of graphs in $\mathcal{G}$, meaning that $G \in \overline{\mathcal{G}}$ if and only if $G$ is the vertex-disjoint union of some number of graphs in $\mathcal{G}$.

The \emph{global $\mathcal{G}$-covering number} of $H$, denoted by $\cn_g^{\mathcal{G}}(H)$, is the smallest $m$ such that there exists an injective $\mathcal{G}$-cover of $H$ of size $m$, i.e., using $m$ graphs in $\mathcal{G}$.
The \emph{union $\mathcal{G}$-covering number} of $H$, denoted by $\cn_u^{\mathcal{G}}(H)$, is the smallest $m$ such that there exists an injective $\overline{\mathcal{G}}$-cover of $H$ of size $m$, i.e., using $m$ vertex-disjoint unions of graphs in $\mathcal{G}$.
The \emph{local $\mathcal{G}$-covering number} of $H$, denoted by $\cn_\ell^\mathcal{G}(H)$, is the smallest $k$ such that there exists an injective $\mathcal{G}$-cover of $H$ in which every vertex of $H$ is contained in at most $k$ graphs of the cover, i.e., using any number of graphs from $\mathcal{G}$ but with no vertex of $H$ being contained in more than $k$ of these.\footnote{The covering number framework includes a fourth covering number, the folded $\mathcal{G}$-covering number of $H$, which we omit here, so as not to congest the discussion.}

Many graph parameters (including arboricities, thickness parameters, variants of chromatic numbers, several Ramsey numbers, and interval representations) are $\mathcal{G}$-covering numbers of a certain type and for a certain graph class $\mathcal{G}$.
Moreover, recently the global-union-local framework was extended to settings that do not directly concern graph covers, such as the local and union boxicity~\cite{BSU-18}, and the local dimension of posets~\cite{Uec-16}, which has stimulated research drastically~\cite{MM-18,FU-19,TW-17,KMMSSUW-18,BGT-17,BPSTT-17}.
Our proposed local and union page numbers naturally arise from the covering number framework by using ordered graphs and ordered subgraphs in the above definitions and taking $\mathcal{G}$ to be the class of all crossing-free ordered graphs.

Particularly the local page number might be very useful in applications.
For example, oftentimes the spine ordering $\prec$ of $G$ is already given from the problem formulation (by time stamps, geographic positions or a genetic sequence).
Then the edges of $G$ model some kind of connections and classical book embeddings are used to distribute the connections to machines that can process sets of connections that satisfy the LIFO (last-in-first-out) property.
Local book embeddings could be used to model situations in which the total number of machines is not the scarce resource but rather the number of machines working on the same element, i.e., vertex.
Imagine for example limited capacity at each element in terms of computing power (as for cell phones) or simply spatial restrictions (as for genes).
This kind of task is precisely modeled by local book embeddings and the local page numbers.


\subsection{Our Contribution}
\label{subsec:our-results}

First, we show that the new parameters $\pn_\ell(G)$ and $\pn_u(G)$ can be arbitrarily smaller than the classical page number $\pn(G)$, while local and union page number are always at most a multiplicative factor of $4$ apart.

\begin{theorem}\label{thm:separation}
 For any $k \geq 3$ and infinitely many values of $n$, there exist $n$-vertex graphs $G$ with
 \[
  \pn_\ell(G) \leq \pn_u(G) \leq k+2 \quad \text{and} \quad \pn(G) = \Omega\left(\sqrt{k}n^{1/2-1/k}\right).
 \]
 In contrast, for every graph $G$ we have $\pn_u(G) \leq 4 \pn_\ell(G)+2$.
\end{theorem}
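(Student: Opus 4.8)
The statement bundles two essentially independent claims: a \emph{separation} (sparse graphs whose classical page number dwarfs their union page number) and a \emph{comparison bound} $\pn_u(G) \le 4\pn_\ell(G) + 2$. My plan is to treat them separately, and in fact to route both through one density lemma.

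For the comparison bound I would not try to re-optimise the spine ordering but exploit sparsity. The first step is to record that a $k$-local book embedding forces low density: every page is crossing-free, hence outerplanar, so a page spanning $t$ vertices carries at most $2t-3$ edges; summing over pages and charging each vertex to the at most $k$ pages it meets yields $|E(H)| < 2k\,|V(H)|$ for every subgraph $H$. By Nash--Williams this gives $\arb(G) \le 2\pn_\ell(G)+1$. The heart of the argument is then a lemma of the shape $\pn_u(G) \le 2\,\arb(G)$: writing $G$ as an edge-disjoint union of $a = \arb(G)$ forests, I would fix a \emph{single} common spine ordering and split each forest into two pages so that in every resulting page each connected component is crossing-free, whence $\pn_u(G) \le 2(2\pn_\ell(G)+1) = 4\pn_\ell(G)+2$. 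The main obstacle is this lemma: a forest can require arbitrarily many union-pages under an adversarial ordering (a path can be laid out so that linearly many of its edges pairwise cross while all lying in one component), so the common ordering must be chosen from a simultaneous rooting or elimination structure of the forests, and one must argue that the factor $2$ already suffices to separate in-component crossings, e.g.\ via a parity/level colouring that forces each colour class of a forest to break into crossing-free components.

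For the separation I would exhibit an $n$-vertex graph $G$ with $\Theta(kn)$ edges (density tuned so $\arb(G) \le \tfrac{k}{2}+O(1)$, hence $\pn_u(G) \le k+2$ directly from the lemma above, or by an explicit layout in which each page is a vertex-disjoint union of crossing-free pieces) drawn from a family that attains near-maximal classical page number for its edge count. Since the comparison lemma already secures the small upper bound on $\pn_u(G)$ regardless of connectivity, the entire content of the separation lies in the \emph{lower bound} on $\pn(G)$. I would obtain it by showing that every spine ordering of $G$ forces a large set of pairwise crossing edges (a ``separated matching''): such edges must occupy pairwise distinct pages, so the largest set forced in every ordering is a lower bound for $\pn(G)$, and for a graph with $\Theta(kn)$ edges realising the Malitz-type bound $\pn = \Omega(\sqrt{|E|})$ this gives $\pn(G) = \Omega(\sqrt{kn}\cdot n^{-1/k}) = \Omega(\sqrt{k}\,n^{1/2-1/k})$, the $n^{-1/k}$ loss being the price of the structural (girth-type) restriction that keeps the density, and hence $\pn_u$, small.

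The decisive difficulty in the separation is precisely this forced-crossing lower bound. A single balanced cut produces $\Omega(kn)$ edges crossing it, but Erd\H{o}s--Szekeres only guarantees that a $\sqrt{\phantom{x}}$-fraction of them form a pairwise-crossing \emph{or} a pairwise-nested family, and the nested (laminar) family is useless since its edges share a page. Ruling out this laminar escape in \emph{every} ordering is where the specific expansion-plus-girth structure of the chosen family must be engineered, and matching the exponent $1/2-1/k$ together with the coefficient $\sqrt{k}$ is exactly the part that cannot be invoked generically but has to be extracted from the construction.
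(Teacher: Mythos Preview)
Your overall architecture---route both claims through a density bound and arboricity---matches the paper's, and your derivation of $\arb(G)\le 2\pn_\ell(G)+1$ from the outerplanarity of pages is exactly what the paper does. The difference, and the gap, lies in how you get from arboricity to a small union page number.

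You aim for $\pn_u(G)\le 2\arb(G)$ by choosing a clever common spine ordering for the $\arb(G)$ forests and then $2$-colouring each forest so that monochromatic components become crossing-free. You correctly identify this as the hard step and leave it unresolved. The paper sidesteps it entirely with one extra observation: pass to \emph{star} arboricity. A star is crossing-free under \emph{any} spine ordering, so if you decompose $G$ into $\sa(G)$ star forests and take an arbitrary ordering, every connected component on every page is already crossing-free; hence $\pn_u(G)\le \sa(G)$. The known bound $\sa(G)\le 2\arb(G)$ (Alon--McDiarmid--Reed) then gives $\pn_u(G)\le 2\arb(G)\le \mad(G)+2\le 4\pn_\ell(G)+2$ with no work on the ordering at all. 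So the lemma you flag as the main obstacle is true, but its easy proof goes through stars, not through engineering a simultaneous layout of general forests.

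For the separation you are over-engineering. The paper does not build a graph and then establish the forced-crossing lower bound; it simply \emph{cites} Malitz's theorem that for every $k\ge 3$ there exist $n$-vertex $k$-regular graphs with $\pn(G)=\Omega(\sqrt{k}\,n^{1/2-1/k})$. Since a $k$-regular graph has $\mad(G)=k$, the density route above immediately yields $\pn_u(G)\le k+2$. The ``decisive difficulty'' you describe is thus already a theorem in the literature, not something to be reproved here.
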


While for every planar graph $G$ we have $\pn(G) \leq 4$~\cite{Yan-89}, it is not known whether there is a planar graph $G$ with $\pn(G) = 4$.
The best known lower bound was given by Bernhart and Kainen~\cite{BK-79}, who presented a planar graph $G$ with $\pn(G) = 3$.
That very graph satisfies $\pn_\ell(G) = 2$, but we can augment it to a planar graph with local page number $3$.

\begin{theorem}\label{thm:planar-LB}
 There is a planar graph $G$ with $\pn_\ell(G) = 3$.
\end{theorem}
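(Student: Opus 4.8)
\medskip
\noindent\textbf{Proof plan.}\;
The plan is to start from the Bernhart--Kainen planar graph, which we denote $H$, with $\pn(H)=3$ and, as recalled in the introduction, $\pn_\ell(H)=2$, and to obtain $G$ by \emph{augmenting} $H$ with a bounded number of extra vertices and edges while keeping $G$ planar. Concretely, we would stellate a carefully chosen set of faces of $H$ and add a few further vertices, each incident to several vertices of $H$ lying on a common face, so that the added structure ``pins down'' which pages the vertices of $H$ may use in any $2$-local book embedding. We claim that the resulting planar graph $G$ satisfies $\pn_\ell(G)=3$.

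The upper bound $\pn_\ell(G)\le 3$ is the easy direction: we choose the augmentation so that $G$ still admits a $3$-page book embedding, obtained from a $3$-page book embedding of $H$ by inserting each new vertex next to a suitable neighbour on the spine and distributing its (few) incident edges among the three existing pages without creating monochromatic crossings. Then $\pn_\ell(G)\le\pn(G)\le 3$.

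The substance is the lower bound $\pn_\ell(G)\ge 3$, i.e.\ that $G$ admits no $2$-local book embedding. Suppose $(\prec,\calP)$ were one. The purpose of the added structure is to force, in any such embedding, that \emph{all} vertices of $H$ are incident only to the \emph{same} two pages, say $P_p$ and $P_q$ --- equivalently, that the edges of $H$ occupy only these two pages in total. Granting this, $(\prec|_{V(H)},\{E(H)\cap P_p,\ E(H)\cap P_q\})$ is a $2$-page book embedding of $H$, contradicting $\pn(H)=3$. Hence no $2$-local book embedding of $G$ exists, so $\pn_\ell(G)=3$.

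The hard part will be designing the augmentation and proving the ``same two pages'' claim for \emph{every} spine ordering of $V(G)$. Note that a gadget attached to a single vertex $v$ of $H$, sharing only $v$ with the rest of $G$, cannot help here: the spine ordering is adversarial and may place such a gadget entirely to one side of $v$, where it imposes no crossing constraint on the edges of $H$ at $v$; so the new vertices must be genuinely interlaced with $H$, shared among several of its vertices. Verifying the claim then reduces to a finite but delicate case analysis over the relative spine positions of each vertex of $H$, of its $H$-neighbours, and of the new vertices, using the planarity of $G$ to cut down the configurations. We expect this analysis --- together with finding an augmentation small enough to keep the case analysis tractable yet rigid enough to force the two-page structure on $H$ --- to be the main obstacle; for a concrete small $G$ it may help to exploit symmetry or a computer check.
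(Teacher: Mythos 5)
There is a genuine gap: your proposal is a plan whose central step --- designing the augmentation and proving that it forces all of $H$ onto the same two pages in every hypothetical $2$-local embedding --- is left entirely unresolved, and you say so yourself. That step is the whole content of the lower bound, and it is far from routine. In a $2$-local embedding the total number of pages is unbounded, so different parts of $H$ are a priori free to use entirely different pairs of pages; forcing a single common pair $\{P_p,P_q\}$ across \emph{all} of $V(H)$ is a very strong global rigidity statement, and it is not clear that any planar augmentation can achieve it. (Even getting two \emph{adjacent} vertices to share the same page pair already requires an argument.) As written, the proposal does not prove the theorem.

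For comparison, the paper avoids the global reduction to $\pn(H)\geq 3$ altogether and argues locally inside a large stacked triangulation $T_9$ (iterated stellation starting from $K_3$). Assuming a $2$-local book embedding, one first finds, via a $K_4$ in $T_1$ and the non-outerplanarity of $K_4$, a single edge $vw$ with $\calP_v=\calP_w=\{P_1,P_2\}$. The seven vertices stacked over $vw$ in later iterations are all adjacent to both $v$ and $w$ and induce a path; a pigeonhole argument on their spine positions and page assignments produces a facial triangle of $T_8$ whose three edges all lie on one page and whose every ``gap'' on the spine is blocked, so the vertex stellated into that triangle in $T_9$ cannot be embedded. This yields the contradiction directly, with no need to recover a $2$-page embedding of a page-number-$3$ graph. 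If you want to salvage your approach, you would need to either exhibit a concrete augmentation and carry out the (adversarial-ordering) case analysis in full, or redirect the argument toward a local obstruction of the kind the paper uses; the repeated-stellation structure is precisely what supplies enough interlaced common neighbours to make the pigeonhole work.
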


For graphs $G$ with tree-width~$k$, it is known that $\pn(G) \leq k$ if $k \in \{1,2\}$~\cite{RV-95} and $\pn(G) \leq k+1$ if $k \geq 3$~\cite{GH-01}, and both bounds are best possible~\cite{DW-07,VWY-09}.
For the local and union page number we get a lower bound of $k$.

\begin{theorem}\label{thm:tree-width-LB}
 For every $k \geq 1$ there is a graph $G$ of tree-width~$k$ with $\pn_u(G) \geq \pn_\ell(G) \geq k$.
\end{theorem}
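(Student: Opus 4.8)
The plan is to take $G = K_{k,N}$, the complete bipartite graph with parts $A = \{a_1,\dots,a_k\}$ and $B = \{b_1,\dots,b_N\}$, for a sufficiently large $N$ ($N = k^2(k-1)$ suffices when $k\geq 2$; the case $k=1$ is trivial, as a star has tree-width $1$ and local page number $1$). Since $\pn_\ell(G)\leq\pn_u(G)$ always holds (see~\eqref{eq:inequalities}), the whole task reduces to showing (i) $\operatorname{tw}(K_{k,N}) = k$ and (ii) $\pn_\ell(K_{k,N})\geq k$, i.e.\ that $K_{k,N}$ admits no $(k-1)$-local book embedding.

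For (i) I would use the tree decomposition whose bags are $A\cup\{b_j\}$ for $j=1,\dots,N$, arranged in a star; it has width $k$, so $\operatorname{tw}(G)\leq k$. For the matching lower bound, $K_{k,k+1}\subseteq G$ has a $K_{k+1}$-minor via the branch sets $\{a_i,b_i\}$ for $i\leq k$ together with $\{b_{k+1}\}$, since every pair of these branch sets is joined by an edge of $K_{k,k+1}$; hence $\operatorname{tw}(G)\geq k$.

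For (ii), suppose towards a contradiction that $(\prec,\calP)$ is a $(k-1)$-local book embedding of $K_{k,N}$, with $\prec$ read cyclically. Each $b_j$ has exactly $k$ incident edges spread over at most $k-1$ pages, so some two of them, say $b_ja_{s_j}$ and $b_ja_{t_j}$ with $s_j<t_j$, lie on a common page. Pigeonholing first over the $\binom{k}{2}$ index pairs and then over the two arcs into which $a_s$ and $a_t$ cut the cyclic order, I obtain a fixed pair $\{s,t\}$, a fixed one of these two arcs, and a subset $B''\subseteq B$ of size at least $N/\bigl(2\binom{k}{2}\bigr)$, all of whose vertices lie on that arc strictly between $a_s$ and $a_t$ and each of which has $b_ja_s$ and $b_ja_t$ on a common page $Q_j$. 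Writing the cyclic order restricted to $\{a_s\}\cup B''\cup\{a_t\}$ as $a_s, b_{(1)},\dots,b_{(M)}, a_t$ with $M=|B''|$, the key claim is that $Q_1,\dots,Q_M$ are pairwise distinct: if $Q_i=Q_{i'}=Q$ with $i<i'$, then $Q$ contains the four edges $a_sb_{(i)}, a_tb_{(i)}, a_sb_{(i')}, a_tb_{(i')}$, and in particular the two edges $a_sb_{(i')}$ and $a_tb_{(i)}$, whose four endpoints occur in the cyclic order $a_s, b_{(i)}, b_{(i')}, a_t$ and therefore cross, contradicting~\eqref{eq:non-crossing}. Consequently $a_s$ is incident to edges $a_sb_{(i)}\in Q_i$ on the $M$ distinct pages $Q_1,\dots,Q_M$, so $|\calP_{a_s}|\geq M\geq N/\bigl(2\binom{k}{2}\bigr)>k-1$ by the choice of $N$, contradicting $(k-1)$-locality. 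Hence $\pn_\ell(K_{k,N})\geq k$, and $\pn_u(K_{k,N})\geq\pn_\ell(K_{k,N})\geq k$ as well.

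I expect step (ii) to be the only real obstacle, and it is worth noting why nothing softer works: a $k$-local book embedding of an $n$-vertex graph has only $O(kn)$ edges, because every page is outerplanar in the induced spine order, so a bare density count gives at best a bound of order $\operatorname{tw}/2$ for tree-width-$\operatorname{tw}$ graphs and can never reach $\operatorname{tw}$. One genuinely needs the crossing argument above, which exploits the abundance of degree-$k$ vertices of $K_{k,N}$ to force a single ``hub'' $a_s$ onto unboundedly many pages. The only delicate quantitative point is keeping track of the two pigeonhole losses so that $M$ can be pushed past $k-1$, which the choice $N = k^2(k-1)$ handles.
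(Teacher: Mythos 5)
Your proof is correct, and it takes a genuinely different route from the paper's. The paper proves \cref{thm:tree-width-LB} via \cref{lem:forest-embedding}: for every $k$-tree $G$ it builds, by a double induction, a much larger $k$-tree $\bar G$ any $(k-1)$-local book embedding of which must contain a \emph{forest} embedding of $G$, and then derives the contradiction from the two edge counts $|E|\leq (k-1)(|V|-1)$ for $(k-1)$-local forest embeddings versus $|E|=k|V|-\binom{k+1}{2}$ for $k$-trees. You instead take the explicit graph $K_{k,N}$ with $N=k^2(k-1)$ (whose tree-width is indeed $k$, by your star decomposition and the $K_{k+1}$-minor) and run a direct pigeonhole-plus-crossing argument; I checked the key crossing claim (in the cyclic order $a_s,b_{(i)},b_{(i')},a_t$ the edges $a_sb_{(i')}$ and $a_tb_{(i)}$ do cross, so the pages $Q_i$ are pairwise distinct) and the arithmetic ($N/(2\binom{k}{2})=k>k-1$), and both are sound. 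Your closing remark about why a bare density count cannot reach $k$ is exactly the obstacle the paper's lemma is engineered to overcome: it replaces the outerplanarity bound $2|V_P|-3$ per page by the forest bound $|V_P|-1$. What your approach buys is a short, self-contained proof with a small explicit witness (about $k^3$ vertices, versus the paper's iterated construction of unspecified but enormous size); what the paper's approach buys is the reusable \cref{lem:forest-embedding}, which also drives the subsequent discussion of whether every $k$-tree has local page number exactly $k$ and the $\binom{k+1}{2}$-trees decomposition. It is worth noting that the two arguments share a kernel — attach many degree-$k$ vertices to a fixed $k$-set and pigeonhole — but the local contradiction differs: the paper finds a $K_{2,3}$ forced onto a single page, while you force a single hub vertex $a_s$ onto more than $k-1$ distinct pages.
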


Finally, it is known that $\pn(G) \leq 2$ if and only if $G$ is a subgraph of a planar Hamiltonian graph~\cite{BK-79}.
Hence, it follows from~\cite{Wig-82} that deciding $\pn(G) \leq 2$ is NP-complete, which easily generalizes to $\pn(G) \leq k$ for each $k \geq 2$.
(Since $\pn(G) = 1$ is equivalent to $G$ being outerplanar, this can be efficiently tested.)
If the spine ordering $\prec$ is already given, the problem of finding an edge partition into $k$ crossing-free pages is equivalent to that of properly $k$-coloring circle graphs and hence determining the smallest such $k$ is NP-complete~\cite{GJMP-80}.
While properly $k$-coloring circle graphs is polynomial-time solvable for $k=2$, it is open whether the problem becomes NP-hard for fixed $k \geq 3$.
For the local page number we have NP-completeness for fixed spine ordering $\prec$ and each fixed $k \geq 3$.

\begin{theorem}\label{thm:hardness}
 For any $k \geq 3$, it is NP-complete to decide for a given graph $G$ and given spine ordering $\prec$, whether there exists an edge partition $\calP$ such that $(\prec,\calP)$ is a $k$-local book embedding.
\end{theorem}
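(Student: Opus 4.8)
\emph{Proof plan.} Membership in NP is immediate: a $k$-local book embedding is witnessed by its edge partition $\calP$, which may be assumed to have at most $|E(G)|$ nonempty parts, and one checks in polynomial time that no two crossing edges lie in a common part and that every vertex meets at most $k$ parts. For hardness, the plan is to reduce from $k$-colorability of graphs, which is well known to be NP-complete for every fixed $k \geq 3$.

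Given a graph $F$, I would construct $G$ and a spine ordering $\prec$ from chords of one circle. A \emph{reference clique} is a set of $2k-1$ pairwise crossing edges; in every book embedding it occupies $2k-1$ distinct pages, which we name $1,\dots,2k-1$ relative to a fixed embedding (page labels being arbitrary, this is without loss of generality). A \emph{$j$-pinned edge} crosses every reference edge except the one on page $j$, and hence is forced onto page $j$. For each vertex $u$ of $F$ I would add a \emph{choice vertex} $v_u$, the edge $e_u = v_u u$, and $k$ further edges at $v_u$ pinned to pages $1,\dots,k$; since $v_u$ then already meets pages $1,\dots,k$, the $k$-local bound forces $e_u$ onto one of them, a free choice that will encode the colour of $u$.

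The central gadget is a \emph{copy gadget} that forces two edges $a,b$, each already constrained to a page in $\{1,\dots,k\}$, onto a common page: at a shared endpoint $z$ of $a$ and $b$ add $k-1$ edges pinned to the distinct pages $k+1,\dots,2k-1$; then $z$ already meets $k-1$ pages, so its $k$-local bound admits exactly one further page, forcing $a$ and $b$ to share it (and imposing nothing else). Because edges with a common endpoint never cross, a conflict between two choice edges cannot be realized directly; instead, for every edge $uw$ of $F$ I would attach a fresh copy $e_u^{uw}$ of $e_u$ via a copy gadget at $u$ and a fresh copy $e_w^{uw}$ of $e_w$ via a copy gadget at $w$, additionally make each copy cross all reference edges on pages $k+1,\dots,2k-1$ (so that the copy-gadget conclusion really transfers the colour rather than a dummy page), and finally let $e_u^{uw}$ and $e_w^{uw}$ be two chords that cross each other and are otherwise crossing-free. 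This single crossing forces the pages of $e_u$ and $e_w$ to differ, exactly mirroring the edge $uw$.

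A routine check then shows that $G$ has a $k$-local book embedding if and only if $F$ is $k$-colorable: a proper $k$-colouring yields a page assignment by putting every choice edge and all its copies on the page of its colour (with pinned and reference edges on their pages), under which the $k$-local bound holds everywhere, tightly at each $v_u$ and each $u$; conversely, any $k$-local book embedding of $G$ induces such a colouring through the pages of the choice edges. As $G$ has $O\bigl(k(|V(F)|+|E(F)|)\bigr)$ vertices and is built in polynomial time, NP-completeness follows. I expect the main work to lie in two places: first, the unenlightening but fiddly layout step of realizing all reference, pinned, choice, and copy chords with precisely the intended crossings (and no spurious ones) by a single spine ordering, cleanest via disjoint intervals along the spine and most readable with a figure; second, and more delicate, verifying that the $k$-local bounds at the vertices $u$, which carry $e_u$ together with all dummy and copy edges of every $F$-edge at $u$, enforce exactly ``every copy of $e_u$ lies on page of $e_u$'' and do not over-constrain the instance, which hinges on reusing the same $k-1$ dummy pages $k+1,\dots,2k-1$ in all copy gadgets incident to a given vertex.
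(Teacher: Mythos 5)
The paper itself contains no proof of this theorem (it defers to the first author's Bachelor's thesis \cite{Mer-18}), so I can only judge your plan on its own terms --- and it has a genuine gap that breaks the reduction. Your NP-membership argument is fine, but all of your forcing gadgets rest on the claim that a ``$j$-pinned edge'' (one crossing every reference edge except the one on page $j$) is \emph{forced onto page $j$}. That would be true if the total number of pages were bounded by $2k-1$, but here only the \emph{local} number of pages is bounded: a pinned edge may always escape onto a brand-new page, and the only thing that can ever forbid this is that one of its endpoints already meets $k$ pages. Your attempt to bootstrap such saturation fails at the first step: the $k$ pinned edges you attach at $v_u$ (and likewise the $k-1$ pinned edges at $z$ in the copy gadget) all share the endpoint $v_u$ (resp.\ $z$), hence pairwise do not cross, hence may all be placed together on a single fresh page. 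Then $v_u$ meets only one page through them, $e_u$ is unconstrained, and the copy gadget transfers nothing.

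Concretely, the reduction is unsound: for \emph{every} input graph $F$, the instance $(G,\prec)$ you build admits even a $2$-local book embedding. For each $u\in V(F)$ put all edges of $G$ incident to $u$ (namely $e_u$, every copy $e_u^{uw}$, and all the dummy edges of the copy gadgets at $u$) on one fresh page $Q_u$; put the $k$ pinned edges at $v_u$ on another fresh page $R_u$; and give each reference edge its own page. Every page is a star, hence crossing-free by the definition of crossing, and every vertex meets at most two pages ($v_u$ meets $Q_u$ and $R_u$; all other vertices meet one). Since $k\geq 3$, the answer is always ``yes'', independently of whether $F$ is $k$-colourable. The missing idea is a non-circular mechanism for saturating a vertex with $k$ distinct pages: this can never be achieved by incident edges alone (a star always fits on one page), and crossings with a reference clique only restrict which \emph{existing} pages an edge may use --- they never force it off a fresh one. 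Until such a mechanism is supplied, the hardness direction does not go through.
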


For a proof of \cref{thm:hardness} we refer the interested reader to the Bachelor's thesis of the first author~\cite{Mer-18}.

\section{Bounds in Terms of Density}
\label{sec:density}

Though not a fixed mathematical concept, the density of a graph $G = (V,E)$ quantifies the number $|E|$ of edges in terms of the number $|V|$ of vertices.
An important specification of density is the \emph{maximum average degree} of $G$ defined by
\[
 \mad(G) = \max\left\{ \frac{2|E(H)|}{|V(H)|} \mid H \subseteq G, H \neq \emptyset\right\}.
\]
Recall that for a linear embedding $(\prec,\calP)$ of $G = (V,E)$ and a page $P \in \calP$ we denote by $G_P = (V_P,P)$ the subgraph of $G$ on all edges in $P$ and all vertices of $G$ with at least one incident edge in $P$.
Clearly, if $P$ is crossing-free, then $G_P$ is outerplanar and thus $|P| \leq 2|V_P|-3$.
As $\bigcup_{P \in \calP} P = E$ and $V_P \subseteq V$ for each page $P$, we immediately get an upper bound on the density of any graph with a $k$-local book embedding.

\begin{lemma}\label{lem:density-bound}
 For any graph $G = (V,E)$ we have
 \[
   \pn_\ell(G) \geq \max\left\{ \frac{|E(H)|}{2|V(H)|-3} \mid H \subseteq G, H \neq \emptyset\right\}.
 \]
\end{lemma}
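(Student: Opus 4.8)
The plan is to start from an optimal $k$-local book embedding $(\prec,\calP)$ of $G$, with $k = \pn_\ell(G)$, and prove the equivalent bound $|E(H)| \leq k\,(2|V(H)|-3)$ for every nonempty subgraph $H \subseteq G$; taking the maximum over all such $H$ then gives exactly the claimed inequality. Subgraphs $H$ with $|V(H)| \leq 1$ are vacuous, since then $|E(H)| = 0$, so I may assume $|V(H)| \geq 2$, which makes $2|V(H)|-3 \geq 1$ strictly positive — a fact the case analysis below relies on.

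Next I would restrict the embedding to $H$. The spine ordering $\prec$ induces a linear ordering of $V(H)$, and since the property ``$uv$ and $xy$ cross'' depends only on the relative order of their four endpoints, two edges of $H$ that cross in the induced ordering also cross in $\prec$; hence each restricted page $P \cap E(H)$ is crossing-free. Therefore the graph it spans — on edge set $P \cap E(H)$ and the set $V_P^H$ of $H$-vertices with an incident edge in $P$ — is outerplanar, so $|P \cap E(H)| \leq 2|V_P^H| - 3$ for every page $P$ with $P \cap E(H) \neq \emptyset$ (such a page has $|V_P^H| \geq 2$). Let $\calP_H$ be the set of pages meeting $E(H)$ and $m := |\calP_H|$. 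Summing this bound gives $|E(H)| \leq 2\sum_{P \in \calP_H} |V_P^H| - 3m$. The crucial observation is that $\sum_{P \in \calP_H} |V_P^H|$ can be estimated in two ways: trivially it is at most $m\,|V(H)|$, and by $k$-locality it is at most $k\,|V(H)|$, because for each $v \in V(H)$ the pages of $\calP_H$ touching $v$ are among the at most $k$ pages of $\calP$ incident to $v$ in $G$. Using $\min(m,k)\,|V(H)|$ and splitting into the case $m \geq k$ (where the $-3m$ term already suffices, as $2k|V(H)|-3m \leq k(2|V(H)|-3)$) and the case $m \leq k$ (where $|E(H)| \leq m(2|V(H)|-3) \leq k(2|V(H)|-3)$ since $2|V(H)|-3 \geq 0$) yields the desired inequality in all cases.

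I expect no real obstacle here: the argument is essentially a double-counting over vertices and pages combined with the $2n-3$ edge bound for outerplanar graphs. The only two points that need care are the observation that restricting a book embedding to a subgraph is again a book embedding (so the per-page outerplanarity estimate applies to $H$ and not merely to $G$), and the small case distinction on $m$ versus $k$ that is needed to merge the two estimates on $\sum_{P} |V_P^H|$ into the single clean inequality $|E(H)| \leq k\,(2|V(H)|-3)$.
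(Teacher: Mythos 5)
Your proof is correct and follows essentially the same route as the paper: restrict the $k$-local book embedding to $H$, apply the outerplanar bound $|P|\leq 2|V_P|-3$ on each page, and sum using $\sum_P |V_P|\leq k|V(H)|$ from locality. Your explicit case distinction between $m\leq k$ and $m\geq k$ is a welcome bit of extra care that the paper's one-line chain of inequalities glosses over (its final step tacitly assumes the embedding has at least $k$ nonempty pages).
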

\begin{proof}
 Let $H$ be any non-empty subgraph of a graph $G$ of local page number $\pn_\ell(G) = k$.
 Then there is a $k$-local book embedding $(\prec,\calP)$ of $H$, each page $P$ of which describes an outerplanar graph $H_P = (V_P,P)$.
 Thus
 \[
  |E(H)| \leq \sum_{P \in \calP} \left(2|V_P|-3 \right) \leq 2k|V(H)| - 3|\calP| \leq \pn_\ell(G) \cdot (2|V(H)|-3).
 \]
\end{proof}

\noindent
From \cref{lem:density-bound} and \cref{eq:inequalities} we conclude for every graph $G$ that
\begin{equation}
 \pn(G) \geq \pn_u(G) \geq \pn_\ell(G) \geq \mad(G)/4.
\end{equation}
In other words, the graph's density gives a lower bound on all three kinds of page numbers.
Perhaps surprisingly, there is also an \emph{upper} bound on the union and local page numbers in terms of the graph's density.

Nash-Williams~\cite{Nas-64} proved that any graph $G$ edge-partitions into $k$ forests if and only if
\[
 k \geq \max\left\{ \frac{|E(H)|}{|V(H)|-1} \mid H \subseteq G, |V(H)| \geq 2 \right\}.
\]
The smallest such $k$, the \emph{arboricity} $\arb(G)$ of $G$, thus satisfies $\frac{1}{2}\mad(G) < \arb(G) \leq \frac{1}{2}\mad(G)+1$.
The \emph{star arboricity} $\sa(G)$ of $G$ is the minimum $k$ such that $G$ edge-partitions into $k$ star forests.
Thus $\sa(G)$ is the union $\mathcal{G}$-covering number of $G$ with respect to the class $\mathcal{G} = \{K_{1,n} \mid n \in \mathbb{N}\}$ of all stars.
Using the covering number framework, Knauer and the second author~\cite{KU-16} introduced the corresponding local $\mathcal{G}$-covering number, the \emph{local star arboricity} $\sa_\ell(G)$, as the minimum $k$ such that $G$ edge-partitions into some number of stars, but with each vertex having an incident edge in at most $k$ of these stars.
It is known~\cite{AMR-92,KU-16} that $\sa(G)$ and $\sa_\ell(G)$ can be bound in terms of $\arb(G)$ as
\[
 \arb(G) \leq \sa(G) \leq 2\arb(G) \qquad \text{ and } \qquad \arb(G) \leq \sa_\ell(G) \leq \arb(G)+1.
\]

\begin{theorem}\label{thm:bounds-from-stars}
 For any graph $G$ we have $\pn_\ell(G) \leq \sa_\ell(G)$ and $\pn_u(G) \leq \sa(G)$.
 In particular, we have
 \[
  \frac{\mad(G)}{4} \leq \pn_\ell(G) \leq \frac{\mad(G)}{2}+2 \quad \text{ and } \quad \frac{\mad(G)}{4} \leq \pn_u(G) \leq \mad(G)+2.
 \]
\end{theorem}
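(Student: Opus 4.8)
The plan is to reduce both inequalities to one elementary observation: every star $K_{1,n}$ is crossing-free with respect to \emph{every} spine ordering. Indeed, any two edges of a star share its center, while a crossing needs four distinct endpoints; hence two edges of a star can never cross. Consequently, in any linear embedding a page whose graph is a star forest automatically has all of its connected components crossing-free, and a page whose graph is a single star is crossing-free as a whole, regardless of the chosen spine ordering.

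First I would prove $\pn_u(G) \leq \sa(G)$. Fix an arbitrary spine ordering $\prec$ of $V(G)$ and an edge-partition of $G$ into $\sa(G)$ star forests $F_1,\ldots,F_m$; set $\calP = \{E(F_1),\ldots,E(F_m)\}$. Each connected component of $G_{E(F_i)}$ is a star, hence crossing-free by the observation, so $(\prec,\calP)$ is a union embedding with $\sa(G)$ pages. For $\pn_\ell(G) \leq \sa_\ell(G)$, fix again any spine ordering $\prec$ and an edge-partition of $G$ into stars $S_1,\ldots,S_m$ in which every vertex lies on at most $k := \sa_\ell(G)$ of the $S_i$; set $\calP = \{E(S_1),\ldots,E(S_m)\}$. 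Now each page is a single star and hence crossing-free, so $(\prec,\calP)$ is a genuine book embedding; and since $v$ has an incident edge on page $E(S_i)$ exactly when $v \in V(S_i)$, we get $|\calP_v| \leq k$ for every $v$, i.e., the embedding is $k$-local.

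It remains to derive the density bounds, which is pure arithmetic. \Cref{lem:density-bound} together with \cref{eq:inequalities} already gives $\pn_u(G) \geq \pn_\ell(G) \geq \mad(G)/4$. For the upper bounds, combine the quoted inequalities $\sa_\ell(G) \leq \arb(G)+1$ and $\sa(G) \leq 2\arb(G)$ with $\arb(G) \leq \tfrac12\mad(G)+1$ to obtain $\pn_\ell(G) \leq \sa_\ell(G) \leq \tfrac12\mad(G)+2$ and $\pn_u(G) \leq \sa(G) \leq \mad(G)+2$. There is no genuine obstacle in this argument; the only subtlety worth flagging is to respect the definition of a union embedding, which is verified component-by-component — this is precisely why star \emph{forests} are admissible as pages of a union embedding, whereas for the $k$-local book embedding one must keep the individual pages to single stars.
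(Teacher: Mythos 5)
Your proof is correct and follows essentially the same route as the paper: fix an arbitrary spine ordering, use the partition into stars (resp.\ star forests), and observe that a star can never contain two crossing edges since its edges share the center while a crossing requires four distinct endpoints. The arithmetic deriving the $\mad$ bounds from $\arb(G)\leq\tfrac{1}{2}\mad(G)+1$, $\sa_\ell(G)\leq\arb(G)+1$, and $\sa(G)\leq 2\arb(G)$ matches the paper's cited inequalities as well.
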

\begin{proof}
 Take an arbitrary spine ordering $\prec$ and an edge-partition $\calP$ into stars.
 Then each page is crossing-free, which shows $\pn_\ell(G) \leq \sa_\ell(G)$.
 Now take an arbitrary spine ordering $\prec$ and an edge-partition $\calP$ into star forests.
 Then each connected component on each page is a star and thus crossing-free, which shows $\pn_u(G) \leq \sa(G)$.
\end{proof}

Though \cref{thm:bounds-from-stars} is merely an observation, it has a number of interesting consequences.
First of all, the local and union page number are not too far apart: $\pn_\ell(G) \leq \pn_u(G) \leq 4\pn_\ell(G)+2$.
However, the local and union page numbers can be very far from the classical page number.
For example, we have $\sa_\ell(G) \leq k = \mad(G)$ for every $k$-regular graph $G$, and hence $\pn_\ell(G) \leq k$ and $\pn_u(G) \leq k+2$ whenever $G$ is $k$-regular.
On the other hand, Malitz~\cite{Mal-94} proved that for every $k \geq 3$ there are $n$-vertex $k$-regular graphs $G$ with page number $\pn(G) = \Omega(\sqrt{k}n^{1/2-1/k})$.
Together this proves \cref{thm:separation}.

For planar $G$ we have $\arb(G) \leq 3$~\cite{Nas-64}, hence $\sa_\ell(G) \leq 4$~\cite{KU-16}, as well as $\sa(G) \leq 5$~\cite{HMS-96}.
Hence, \cref{thm:bounds-from-stars} immediately gives the following (without relying on Yannakakis' result~\cite{Yan-89}).

\begin{corollary}\label{cor:planar}
 For every planar graph $G$ we have $\pn_\ell(G) \leq 4$ and $\pn_u(G) \leq 5$.
\end{corollary}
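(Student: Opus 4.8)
The plan is to derive this corollary as an immediate consequence of \cref{thm:bounds-from-stars} together with known bounds on the (local) star arboricity of planar graphs, so that no appeal to Yannakakis' theorem $\pn(G) \leq 4$ is needed. The first step is to record that every planar graph $G$ satisfies $\arb(G) \leq 3$: a simple planar graph on $n \geq 2$ vertices has at most $3n-6 \leq 3(n-1)$ edges, and since every subgraph of a planar graph is planar, Nash-Williams' criterion gives $\arb(G) \leq 3$. From here, the local bound is one line: the inequality $\sa_\ell(G) \leq \arb(G)+1$ from~\cite{KU-16} yields $\sa_\ell(G) \leq 4$, and then $\pn_\ell(G) \leq \sa_\ell(G) \leq 4$ by \cref{thm:bounds-from-stars}.

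For the union bound I would not route through the generic estimate $\sa(G) \leq 2\arb(G)$, since that only gives $\sa(G) \leq 6$. Instead I would cite the sharper result of Hakimi, Mitchem and Schmeichel~\cite{HMS-96} that every planar graph has star arboricity at most $5$; combining this with $\pn_u(G) \leq \sa(G)$ from \cref{thm:bounds-from-stars} gives $\pn_u(G) \leq 5$. Thus both inequalities follow by chaining $\pn_\ell(G) \leq \sa_\ell(G)$ and $\pn_u(G) \leq \sa(G)$ with the respective arboricity-type bounds for the planar class.

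The only part of the argument that is not a routine deduction is the input $\sa(G) \leq 5$ for planar $G$, which is a genuine combinatorial theorem rather than a consequence of the arboricity bound alone; once that (and the cited $\sa_\ell \leq \arb+1$) is in hand, there is no remaining obstacle. It is worth emphasising in the write-up that this proof of $\pn_\ell(G), \pn_u(G) \leq 4, 5$ is entirely independent of the difficult fact that planar graphs have page number at most $4$, which is exactly the point being highlighted just before the corollary.
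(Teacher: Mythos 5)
Your proof is correct and follows exactly the paper's route: $\arb(G)\leq 3$ via Nash-Williams, then $\sa_\ell(G)\leq\arb(G)+1\leq 4$ and the Hakimi--Mitchem--Schmeichel bound $\sa(G)\leq 5$, combined with \cref{thm:bounds-from-stars}. Nothing further is needed.
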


\section{Planar Graphs}
\label{sec:planars}

In this section we consider planar graphs.
In particular, we prove \cref{thm:planar-LB} stating the existence of a planar graph with local page number $3$.
Our planar graph will be a large enough stacked triangulation (also known as planar $3$-trees, chordal triangulations, or Apollonian networks).
For this let $T_0 \cong K_3$ and for $n \geq 1$ define $T_n$ as obtained from $T_{n-1}$ by placing a new vertex $v_\Delta$ in each facial triangle $\Delta$ of $T_{n-1}$, and connecting $v_\Delta$ by edges to each of the three vertices of $\Delta$.
Thus, for $n \geq 0$ we have $|V(T_n)| = 3^n + 2$.

Suppose for the sake of contradiction there is a $2$-local book embedding $(\prec,\calP)$ of $T_9$.
We consider the subgraphs $T_0 \subset T_1 \subset \cdots \subset T_9$ of $T_9$.

\begin{claim}
 There exists an edge $vw$ in $T_1$ with $\calP_v = \calP_w$ and $|\calP_v| = |\calP_w| = 2$.
\end{claim}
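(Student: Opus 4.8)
Begin by pinning down $T_1$. Since $T_0=K_3$ has exactly two faces, each bounded by the triangle, $T_1$ arises from the triangle on $\{a,b,c\}$ by inserting a vertex $d$ into one face and a vertex $e$ into the other, each joined to $a,b,c$; thus $T_1\cong K_5-de$, with $a,b,c$ pairwise adjacent and each adjacent to both $d$ and $e$, while $d$ and $e$ are non-adjacent. As $\{a,b,c,d\}$ induces a $K_4$, $T_1$ is not outerplanar, so no book embedding puts all of $E(T_1)$ on a single page. Restrict the hypothetical $2$-local book embedding of $T_9$ to $T_1$ and let $\calP_v$ denote the set of pages carrying an edge of $T_1$ incident to $v$; this is contained in the global page set at $v$, which has size at most $2$, so if two adjacent vertices have equal two-element sets $\calP_v$ then their global page sets agree too. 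Hence it suffices to find an edge $vw\in E(T_1)$ with $\calP_v=\calP_w$ and $|\calP_v|=2$. From here the spine ordering plays no further role: the argument uses only the map $c(\cdot)$ assigning each edge its page, the adjacencies of $T_1$, and the fact that $c$ is not constant on $E(T_1)$.

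Call $v\in\{a,b,c\}$ \emph{monochromatic} if all edges of $T_1$ at $v$ share a page; since each of $a,b,c$ has an incident edge, $|\calP_v|\in\{1,2\}$, so either at least two of $a,b,c$ are monochromatic, or at least two have $|\calP_v|=2$. In the first case, say $a$ and $b$ are monochromatic; then $c(ab)$ lies on the page of $a$ and of $b$, so $\calP_a=\calP_b=\{P\}$ and all seven edges at $a$ or $b$ lie on page $P$. If $c$ were monochromatic too, all nine edges would lie on $P$, which is impossible; so $\calP_c=\{P,S\}$ for some $S\ne P$, and since $c(cd),c(ce)\in\{P,S\}$ at least one of them, say $c(cd)$, equals $S$. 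Then $\calP_d=\{c(ad),c(bd),c(cd)\}=\{P,S\}=\calP_c$, so $cd$ is the required edge (and symmetrically with $ce$ and $e$).

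In the second case, say $|\calP_a|=|\calP_b|=2$. If $\calP_a=\calP_b$ the edge $ab$ works; otherwise, as $c(ab)\in\calP_a\cap\calP_b$ we write $\calP_a=\{P,Q\}$, $\calP_b=\{P,R\}$ with $P,Q,R$ distinct and $c(ab)=P$. If $\calP_c$ equals $\calP_a$ or $\calP_b$, then $ac$ or $bc$ works, so assume not; from $c(ac)\in\calP_a\cap\calP_c$, $c(bc)\in\calP_b\cap\calP_c$ and $|\calP_c|\le 2$ one deduces that either $\calP_c=\{Q,R\}$ with $c(ac)=Q$ and $c(bc)=R$, or $P\in\calP_c$ with $c(ac)=c(bc)=P$. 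Finally apply the same bookkeeping at $d$: the pages $c(ad)\in\calP_a$, $c(bd)\in\calP_b$ and $c(cd)\in\calP_c$ together occupy at most two pages, and a short check of the (few) possibilities shows that $\calP_d$ then equals one of $\calP_a,\calP_b,\calP_c$ — yielding the edge $ad$, $bd$, or $cd$ — unless $c(ad)=c(bd)=c(cd)=P$. The same analysis at $e$ forces $c(ae)=c(be)=c(ce)=P$, and together with $c(ab)=c(ac)=c(bc)=P$ this puts all nine edges on $P$, which is impossible. Hence a qualifying edge exists, and the claim follows.

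The only mildly technical step, and the one I expect to be the main obstacle, is the last enumeration: bounding the pages of $ad$, $bd$, $cd$ subject to $c(ad)\in\calP_a$, $c(bd)\in\calP_b$, $c(cd)\in\calP_c$ and $|\calP_d|\le 2$. It stays short for two reasons: each of $a,b,c$ is adjacent to every other vertex, so the instant $\calP_c$, $\calP_d$, or $\calP_e$ duplicates an already-fixed two-element page set we are done; and every branch that avoids this conclusion pushes one more edge onto the single page $P$, which $T_1\supseteq K_4$ prevents from swallowing all of $E(T_1)$.
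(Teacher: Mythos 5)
Your proof is correct. It rests on the same two pillars as the paper's argument --- the page of the edge $uv$ lies in $\calP_u\cap\calP_v$ for adjacent $u,v$, and $K_4$ cannot be embedded crossing-free on a single page --- but the execution is different. The paper restricts attention to one of the two $K_4$'s in $T_1$, assumes ``without loss of generality'' that all four vertices satisfy $|\calP_{v_i}|=2$, and invokes the observation that four pairwise-intersecting, pairwise-distinct $2$-element sets must form a star through a common page $P$, which would force the whole $K_4$ onto $P$. Your version instead runs an exhaustive case analysis over all of $T_1\cong K_5-de$ driven by the trichotomy at $a,b,c$ (monochromatic versus two pages), with every escaping branch pushing one more edge onto the single page $P$ until all nine edges land there. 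What your route buys is that it explicitly disposes of the degenerate possibility $|\calP_v|=1$, which the paper's ``WLOG'' silently sweeps aside (and which is not entirely innocent, since the claim specifically requires $|\calP_v|=|\calP_w|=2$); the initial reduction from global to $T_1$-restricted page sets via $|\calP_v|\le 2$ is also clean and correct. What the paper's route buys is brevity: the star argument settles the generic case in three lines, at the cost of leaving the size-one case to the reader. Your final enumeration at $d$ and $e$ checks out in all branches (in particular, the only branch avoiding a qualifying edge is $c(ad)=c(bd)=c(cd)=c(ae)=c(be)=c(ce)=P$, which together with $c(ab)=c(ac)=c(bc)=P$ contradicts non-outerplanarity of $K_4\subseteq T_1$), so there is no gap.
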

Indeed, consider the four vertices $v_1,v_2,v_3,v_4$ of one of the two $K_4$ subgraphs in $T_1$.
Without loss of generality assume that $|\calP_{v_1}| = \cdots = |\calP_{v_4}| = 2$.
As $\calP_{v_i} \cap \calP_{v_j} \neq \emptyset$ for any $i,j \in \{1,\ldots,4\}$, we can see $\calP_{v_1},\ldots,\calP_{v_4}$ as four pairwise incident edges in a multigraph $I$ on vertex set $\calP$, where two vertices of $I$ are connected by an edge if there is common vertex of $G$ on the two respective pages.
Thus, if $\calP_{v_1},\ldots,\calP_{v_4}$ were pairwise distinct, they would form a star, i.e., all pairwise intersections would be the same page $P \in \calP$.
But then the whole $K_4$ subgraph on $v_1,\ldots,v_4$ would be embedded on page $P$, which is impossible as $K_4$ is not outerplanar.

\medskip

So let $vw$ be an edge in $T_1$ with $\calP_v = \calP_w = \{P_1,P_2\}$.
By the inductive construction of stacked triangulations, there is a set $X = \{x_1,\ldots,x_7\}$ of seven vertices in $T_8 - T_1$ that are incident to $v$ and $w$ and induce a path in $T_9$; see \cref{fig:planar-LB-illustration}.
By pigeon-hole principle and cyclic shifts of $\prec$, we may assume that $v \prec x_1 \prec x_2 \prec x_3 \prec x_4 \prec w$, where $x_1,\ldots,x_4$ are consecutive in $\prec$ when restricted to $X$.
Each of $vx_i$ and $wx_i$, $i=1,\ldots,4$, lies on $P_1$ or $P_2$; say $vx_4 \in P_1$.
Then $wx_1,wx_2,wx_3 \in P_2$, and thus $vx_2,vx_3 \in P_1$.
In particular, we have $\calP_{x_2} = \calP_{x_3} = \{P_1,P_2\}$.

\begin{figure}[t]
 \centering
 \includegraphics{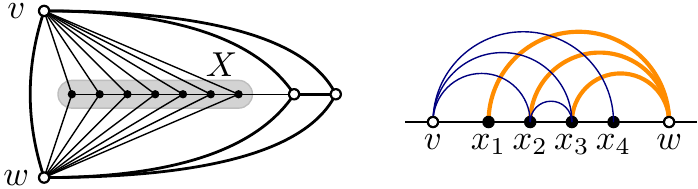}
 \caption{Part of the planar graph with local page number $3$ (left) and part of the hypothetical $2$-local book embedding (right).}
 \label{fig:planar-LB-illustration}
\end{figure}

Now observe that $x_2$ cannot be adjacent to any vertex $y$ with $x_3 \prec y$ and $y \neq w$.
Indeed, such an edge $x_2y$ would cross the edge $vx_3 \in P_1$ and one of $wx_1,wx_3 \in P_2$.
Symmetrically, $x_3$ cannot be adjacent to any vertex $y$ with $y \prec x_2$ and $y \neq v$.
As $X$ induces a path in $T_9$ and no vertex of $X$ lies between $x_2$ and $x_3$ in $\prec$, it follows that $x_2x_3$ is an edge of the path.
By symmetry assume $x_2x_3 \in P_1$.
This implies that $v$ cannot be adjacent to any vertex $y$ with $x_2 \prec y \prec x_3$, as such an edge $vy$ would cross the edges $x_2x_3 \in P_1$ and $wx_2 \in P_2$.

But then $v,x_2,x_3$ form a facial triangle $\Delta$ of $T_8$ with all three edges on page $P_1$.
However, there is no possible placement for the vertex $y = v_\Delta$ in $T_9$ that is adjacent to each of $v,x_2,x_3$.
Thus, the planar graph $T_9$ admits no $2$-local book embedding, which proves \cref{thm:planar-LB}.

\section{Graphs with Bounded Tree-Width}
\label{sec:tree-width}

In this section we investigate the largest union page number and the largest local page number among all graphs of tree-width $k$.
Clearly it suffices to consider edge-maximal graphs of tree-width $k$, the so-called $k$-trees, which are inductively defined as follows:
A graph $G$ is a \emph{$k$-tree} if and only if $G \cong K_{k+1}$ or $G$ is obtained from a smaller $k$-tree $G'$ by adding one new vertex $v$ whose neighborhood in $G'$ is a clique of order $k$.

As our main tool in this section, let us define a linear embedding $(\prec,\calP)$ to be a \emph{forest embedding} if the edges on each page $P \in \calP$ form a forest.
For a graph $G$, we say that a book embedding $(\prec,\calP)$ of some other graph $\bar{G} = (\bar{V},\bar{E})$ \emph{contains} a forest embedding of $G$ if there exists a set $X \subseteq \bar{V}$ such that $G \cong \bar{G}[X]$ and $(\prec,\calP)$ restricted to $\bar{G}[X]$ is a forest embedding of $G$.

\begin{lemma}\label{lem:forest-embedding}
 For every $\ell \in \mathbb{N}$ and every $k$-tree $G$ there exists a $k$-tree $\bar{G}$ such that every $\ell$-local book embedding of $\bar{G}$ contains a forest embedding of $G$.
\end{lemma}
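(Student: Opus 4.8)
The plan is to build $\bar{G}$ by iterated ``blow-up'' of the construction that produces $k$-trees, so that in any $\ell$-local book embedding of $\bar{G}$ we can, vertex by vertex, select a copy of $G$ on which every page already behaves like a forest. Recall that a $k$-tree is built by repeatedly adding a vertex joined to a $k$-clique. I would proceed by induction on the number of vertices of $G$. The base case $G \cong K_{k+1}$ is handled by taking $\bar{G}$ to be a sufficiently large $k$-tree and arguing, as in the proof of \cref{thm:planar-LB}, that among enough vertices attached to a common $k$-clique some $k+1$ of them must reuse the same pages in a controlled way; since a book embedding restricted to any subgraph is still a book embedding and in particular a forest embedding, it suffices to find one $K_{k+1}$ whose restricted embedding is crossing-controlled, which is automatic once the pages are fixed. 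For the inductive step, suppose $G$ is obtained from a smaller $k$-tree $G'$ by adding a vertex $v$ adjacent to a $k$-clique $Q$ of $G'$. By induction there is a $k$-tree $\bar{G}'$ such that every $\ell$-local book embedding of $\bar{G}'$ contains a forest embedding of $G'$. The idea is to form $\bar{G}$ from $\bar{G}'$ by attaching, to \emph{every} $k$-clique of $\bar{G}'$ that could play the role of $Q$, a large number of new ``pendant'' vertices, each joined to that clique; and then to iterate this so that the new vertices are themselves available to host later stages of $G$.

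Concretely, the key steps are: (1) fix $\ell$ and let $N$ be a large threshold depending on $\ell$, $k$ and $|V(G)|$ to be determined; (2) take $\bar{G}'$ from the inductive hypothesis; (3) for each $k$-clique $K$ of $\bar{G}'$, add $N$ new vertices, each adjacent exactly to $K$, keeping the result a $k$-tree; call the result $\bar{G}$ (formally one iterates this, interleaving with the recursive construction, so that all vertices of the eventual copy of $G$ have room — the cleanest bookkeeping is to make $\bar{G}$ big enough that \emph{after} locating a forest embedding of $G'$ inside it, the clique $Q$ of that copy still has $N$ private pendant neighbours available). Given an $\ell$-local book embedding of $\bar{G}$, first apply the inductive hypothesis to obtain a set $X'$ with $\bar{G}[X'] \cong G'$ and the restricted embedding a forest embedding. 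Let $Q$ be the copy of the clique to which $v$ must attach. Among the $N$ pendant vertices hanging off $Q$, each one $u$ has $|\calP_u| \le \ell$, and each edge $uq$ ($q \in Q$) lies on one of these $\le \ell$ pages; so the pattern ``(which $\le \ell$ pages are used, which page carries each of the $k$ edges to $Q$)'' takes at most $\binom{|\calP|}{\le \ell}\cdot \ell^{k}$ values — but we cannot bound $|\calP|$. The fix is the same trick used in the planar argument: view $\calP_u$ as a clique of size $\le\ell$ in the ``intersection multigraph'' on $\calP$, and use that $Q$ itself forces a small common pool of pages — every vertex of $Q$ sees only $\le\ell$ pages, so all edges from pendants to $Q$ live among the $\le k\ell$ pages incident to $Q$. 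Hence by pigeonhole, once $N > (k\ell)^{k}\cdot(\text{something})$, many pendant vertices $u$ have \emph{identical} page-assignment to $Q$. Pick one such $u$ to be the image of $v$: then for the copy $X = X' \cup \{u\}$ we need the restricted embedding on $\bar{G}[X]$ to be a forest embedding, i.e.\ on each page no cycle appears. On pages not meeting $u$ this holds by induction; on the $\le k$ pages carrying edges $uq$, the only possible new cycles would use $u$, but $u$ has exactly one edge on each such page within $X$ (a single edge to $Q$), so $u$ is a leaf there — no new cycle. (If some page carries two edges $uq_1, uq_2$, we instead choose, among the many pendants with a common pattern, enough of them to absorb that case; this is where $N$ must be large.)

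The main obstacle, and the part that needs care, is exactly the unbounded-page-count issue together with the requirement that the \emph{whole} embedding restricted to the $\approx |V(G)|$ chosen vertices be a forest embedding simultaneously. A single pigeonhole step gives one good vertex, but $G$ is built by many additions, and a naive induction would need $\bar{G}$ to anticipate, at every clique, the entire future of the construction; the clean way is to let $N$ grow enough at the outermost level that after extracting $G'$ one still has, attached to every relevant clique of the copy of $G'$, a fresh batch of $N$ pendants, and to make the pigeonhole bucket count $(k\ell)^{k}$ (pages incident to the host clique, raised to the number of edges) rather than anything involving $|\calP|$. Verifying that the pendant-addition keeps the graph a $k$-tree is routine, and verifying that a forest embedding is preserved under adding a leaf-on-each-page is immediate; the only genuinely delicate estimate is choosing $N$ as a tower/iterated exponential in $\ell$, $k$, $|V(G)|$ so that all the nested pigeonhole steps go through. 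I expect the final bound on $|V(\bar{G})|$ to be enormous but finite, which is all the lemma asks.
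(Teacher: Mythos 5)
Your inductive step is essentially the paper's argument, and it works: attach a large independent set of pendants to every $k$-clique of $\bar{G}'$, observe that all pendant--clique edges live on the at most $k\ell$ pages incident to the host clique $Q$ (by $\ell$-locality at the vertices of $Q$), and pigeonhole over the at most $(k\ell)^k$ assignment patterns. Your parenthetical worry about a pattern that repeats a page is resolved exactly as you suspect: three pendants sharing such a pattern put a $K_{2,3}$ on a single crossing-free page, contradicting outerplanarity, so with $N > 2(k\ell)^k$ some pendant is ``rainbow'' and is a leaf on each of its $k$ pages, creating no cycle. (The paper gets away with $3k^2\ell$ pendants by pigeonholing first on the pair of clique vertices and then on the at most $\ell$ pages at one of them, but your count is equally valid.) Also, no tower of exponentials is needed: since the new vertex $v$ of $G$ is added \emph{last}, the outermost batch of pendants only ever has to host $v$; the recursion takes care of everything else.

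The genuine gap is the base case $G \cong K_{k+1}$. Your justification rests on the claim that ``a book embedding restricted to any subgraph is still a book embedding and in particular a forest embedding,'' and this is false: a crossing-free page is outerplanar, not a forest --- a triangle on three consecutive spine vertices is crossing-free but is a cycle. Upgrading ``outerplanar on each page'' to ``forest on each page'' is the entire content of the lemma, so the base case cannot be waved through. Moreover, the pendants attached to a common $k$-clique form an independent set, so finding $k+1$ of them with matching page patterns does not produce a $K_{k+1}$ at all. The paper closes this hole with a second, \emph{inner} induction on $k$: it first builds a $(k-1)$-tree gadget $\bar{G}_{k-1}$ every $\ell$-local book embedding of which already contains a forest-embedded $K_k$, then attaches the independent sets to the $k$-cliques of $\bar{G}_{k-1}$ and uses the rainbow-pendant argument to extend that $K_k$ to a forest-embedded $K_{k+1}$ (augmenting the result to a $k$-tree). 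Without such a device your outer induction has no valid starting point, since $K_k$ is a $(k-1)$-tree rather than a $k$-tree and is therefore out of reach of the outer induction hypothesis.
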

\begin{proof}
 We find $\bar{G}$ based on $G = (V,E)$ by induction on $|V|$ as follows.

 In the base case we have $G \cong K_{k+1}$ and we find $\bar{G}$ by induction on $k$.
 In the base case of this inner induction we have $k=1$ and it suffices (for any $\ell$) to take $\bar{G} = G \cong K_2$.
 For $k > 1$, we get from induction a $(k-1)$-tree $\bar{G}_{k-1}$ all of whose $\ell$-local book embeddings contain a forest embedding of $K_k$.
 Starting with $\bar{G}_{k-1}$, add for each $k$-clique $C$ in $\bar{G}_{k-1}$ an independent set $I_C$ of $3k^2\ell$ vertices, together with all possible edges between $C$ and $I_C$.
 The resulting graph has tree-width $k$ and hence can be augmented to a $k$-tree $\bar{G}$.
 Consider any book embedding $(\prec,\calP)$ of $\bar{G}$.
 The inherited book embedding of $\bar{G}_{k-1} \subset \bar{G}$ contains a forest embedding of $K_k$, i.e., we have a forest embedding of some $k$-clique $C$ in $\bar{G}_{k-1}$.
 If one vertex $v$ in $I_C$ has its $k$ incident edges on $k$ pairwise different pages, then we have a forest embedding of $C \cup v \cong K_{k+1}$, as desired.
 Otherwise, each vertex $v$ in $I_C$ has two incident edges on the same page in $\calP$ joining $v$ with two vertices in $C$.
 By pigeon-hole principle, for a set $I'$ of at least $|I_C| / k^2 = 3\ell$ vertices of $I_C$ these are the same two vertices $c,c'$ of $C$.
 Since each of $c,c'$ has incident edges on at most $\ell$ pages, again by pigeon-hole principle, one page in $\calP$ contains the edges between $c,c'$ and at least $|I'| / \ell = 3$ vertices in $I'$.
 However this is a contradiction as $K_{2,3}$ is not outerplanar.

 \smallskip
 
 Now for the induction step of the outer induction, assume that $G$ is a $k$-tree with $|V| > k+1$ vertices.
 Then $G$ is obtained from a $k$-tree $G'$ by adding one vertex $v$ whose neighborhood in $G'$ is a clique of order $k$.
 From induction we get a $k$-tree $\bar{G}'$ all of whose $\ell$-local book embeddings contain a forest embedding of $G'$.
 Now we can do the same argument as before:
 Obtain $\bar{G}$ from $\bar{G}'$ by adding for each $k$-clique $C$ in $\bar{G}'$ an independent set $I_C$ of size $3k^2\ell$, together with all possible edges between $C$ and $I_C$.
 Then any $\ell$-local book embedding of $\bar{G}$ induces an $\ell$-local book embedding of $\bar{G}'$, which hence contains a forest embedding of $G'$.
 Let $C$ be the $k$-clique in $G'$ that forms the neighborhood of $v$ in $G$.
 The same argumentation as above then shows that at least one vertex in $I_C$ has its $k$ incident edges to $C$ on $k$ distinct pages, giving the desired forest embedding of $G$.
 (Essentially, the only difference to the base case is that adding the independent sets to $\bar{G}'$ gives a full $k$-tree, since $\bar{G}'$ is already a $k$-tree.)
\end{proof}

Having \cref{lem:forest-embedding}, \cref{thm:tree-width-LB} (the existence of a $k$-tree with local page number $k$) follows with two simple edge counts.
\begin{itemize}
 \item[] If $G = (V,E)$ admits a $\ell$-local forest embedding $(\prec,\calP)$, then
 \begin{equation}
  |E| \leq \sum_{P \in \calP} (|V_P|-1) \leq \ell|V| - |\calP| \leq \ell(|V|-1).\label{eq:forest-count}
 \end{equation}

 \item[] If $G = (V,E)$ is a $k$-tree, then
 \begin{equation}
  |E| = k|V| - \binom{k+1}{2}.\label{eq:k-tree-count}
 \end{equation}
\end{itemize}

To prove \cref{thm:tree-width-LB}, we shall find for each $k \geq 1$ a $k$-tree whose local page number is at least $k$.
For $k=1$ there is nothing to show.
For $k \geq 2$, let $G_0 = (V,E)$ be any $k$-tree with $|V| > \binom{k+1}{2}-(k-1)$ (Note that this is a vertex count!) and let $G = \bar{G}_0$ be the corresponding $k$-tree given by \cref{lem:forest-embedding} for $\ell = k-1$.
Assuming for the sake of contradiction that $\pn_\ell(G) \leq k-1$, we obtain a $(k-1)$-local forest embedding $(\prec,\calP)$ of $G_0$.
Then
\[
 |E| \overset{\eqref{eq:forest-count}}{\leq} (k-1)(|V|-1) = k|V| - (|V| + (k-1)) < k|V|-\binom{k+1}{2} \overset{\eqref{eq:k-tree-count}}{=} |E|,
\]
a contradiction.
Hence $\pn_\ell(G) \geq k$, as desired.

\medskip

To end this section, let us also discuss some further implications of \cref{lem:forest-embedding}.
We leave it open whether every $k$-tree has local page number at most $k$, i.e., whether the lower bound in \cref{thm:tree-width-LB} is tight.
By \cref{lem:forest-embedding} this is equivalent to every $k$-tree admitting a $k$-local forest book embedding.
By putting each tree in each forest on a separate page, we even get a $k$-local forest embedding $(\prec,\calP)$ with a tree on each page.
Moreover, by \cref{eq:forest-count} and \cref{eq:k-tree-count} we would have $|\calP| \leq \binom{k+1}{2}$, i.e., no more than $\binom{k+1}{2}$ trees in total, while at most $k$ at any one vertex.

And we get a similar statement for the maximum union page number of $k$-trees.
Suppose $(\prec,\calP)$ is an $\ell$-union embedding of some graph, and that on all pages in $\calP$ together we have $m$ connected components.
Putting each connected component on a separate (new) page, we obtain an $\ell$-local book embedding $(\prec,\tilde{\calP})$ with $|\tilde{\calP}| = m$ pages.
Now if $\pn_u(G) \leq k$ for all $k$-trees, then \cref{lem:forest-embedding} implies that every $k$-tree even admits a $k$-union forest embedding.
Moreover, by \cref{eq:forest-count} and \cref{eq:k-tree-count} we get a forest embedding with $m \leq \binom{k+1}{2}$ trees in total, while having at most $k$ at any one vertex.

Specifically, in order to prove $\pn_\ell(G) \leq k$ for every $k$-tree $G$, our task is to find a partition $\calP$ of the edges in $G$ into at most $\binom{k+1}{2}$ trees, such that every vertex is contained in no more than $k$ of these trees, as well as a spine ordering $\prec$ for which each of the trees is non-crossing.
The first part has a very natural solution:
Every $k$-tree $G$ has chromatic number $k+1$ and admits a unique\footnote{Up to relabeling of color classes.} proper $(k+1)$-vertex coloring $\phi$.
Moreover, there are exactly $\binom{k+1}{2}$ pairs of colors in $\phi$, any pair of color classes induces a tree in $G$, and each vertex of $G$ is contained in exactly $k$ of these trees.
Hence every $k$-tree $G$ edge-partitions into $\binom{k+1}{2}$ trees with each vertex being contained in $k$ of these trees.
Note that in this cover, every $(k+1)$-clique in $G$ has all $\binom{k+1}{2}$ edges in pairwise distinct trees.

We have however not been able to prove (or disprove) the existence of a spine ordering $\prec$ under which no pair of color classes induces a crossing.
If such exists, it would show $\pn_\ell(G) \leq k$ for all $k \geq 1$ and $\pn_u(G) \leq k$ for $k$ odd and $\pn_u(G) \leq k+1$ for $k$ even.
Note that for the union page number we also need to group the $\binom{k+1}{2}$ trees into as few forests of vertex-disjoint trees as possible.
Due to the nature of our coloring, this is equivalent to properly edge-coloring $K_{k+1}$; hence the distinction on the parity of $k$.

\section{Conclusions and Open Problems}
\label{sec:conclusions}

In this paper we presented two novel graph parameters: the local page number $\pn_\ell(G)$ and the union page number $\pn_u(G)$.
Both parameters are weakenings of the classical page number $\pn(G)$ and we have $\pn_\ell(G) \leq \pn_u(G) \leq \pn(G)$.
Hence, one might be able to strengthen existing lower bounds of the form $\pn(G) \geq X$ by showing $\pn_u(G) \geq X$ or even $\pn_\ell(G) \geq X$.
On the other hand, one might be able to support conjectured upper bounds of the form $\pn(G) \leq X$ by showing the weaker bounds $\pn_\ell(G) \leq X$ or even $\pn_u(G) \leq X$.

\begin{figure}[t]
 \centering
 \includegraphics{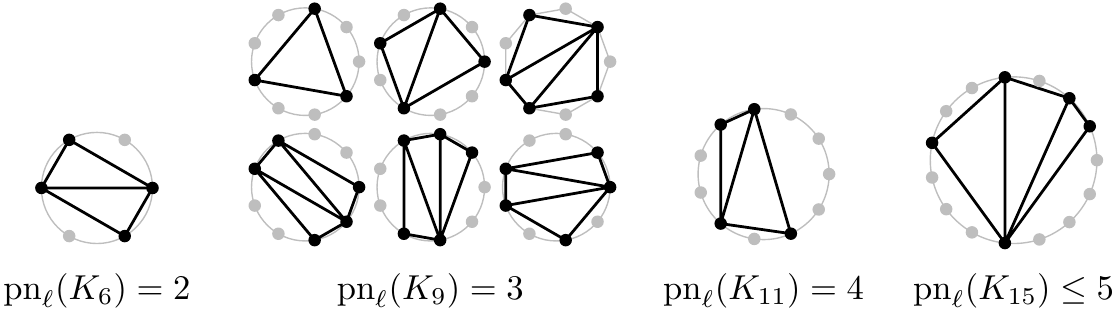}
 \caption{Illustrations of some $k$-local book embeddings of $K_n$ for $n=6,9,11,15$.
  The shown page for $n=6$ ($n=11$, $n=15$) is repeated $3$ times ($11$ times, $15$ times), each shifted cyclically by one position.}
 \label{fig:complete-graphs}
\end{figure}

In this paper we started to pursue this direction of research.
Let us list some concrete cases that are still open:

\begin{itemize}
 \item For the complete graph $K_n$ it is known~\cite{BK-79} that $\pn(K_n) = \lceil n/2\rceil$.
  On the other hand, the density of $K_n$ implies that $\pn_\ell(K_n) \geq \lceil (n-1)/4 \rceil$ (\cref{lem:density-bound}).
  In \cref{fig:complete-graphs} we indicate some $k$-local book embeddings of $K_n$ for some small values of $n$.
  According to this $\pn_\ell(K_6) \leq 2$, $\pn_\ell(K_9) \leq 3$, $\pn_\ell(K_{11}) \leq 4$, and $\pn_\ell(K_{15}) \leq 5$.
  Using the inequality $|E(G)| \leq 2\pn_\ell(G)|V(G)| - 3\pn(G)$ from the proof of \cref{lem:density-bound}, we see that $\pn_\ell(K_7) \geq 3$.
  (And with one further trick we get $\pn_\ell(K_{10}) \geq 4$.)
  We refer to~\cite{Mer-18} for more details, and state it is an open problem to improve the following general bounds:
  \[
   \left\lceil \frac{n-1}{4} \right\rceil \leq \pn_\ell(K_n) \leq \pn_u(K_n) \leq \pn(K_n) = \left\lceil \frac{n}{2} \right\rceil
  \]

 \item In 1989, Yannakakis~\cite{Yan-89} proved that for any planar graph $G$ we have $\pn(G) \leq 4$, while removing an earlier claim~\cite{Yan-86} that there would be some planar graph $G$ with $\pn(G) \geq 4$.
  Ganley and Heath~\cite{GH-01} observed that stacked triangulation $T_2$ (using our notation from \cref{sec:planars}, but also known as the Goldner-Harary graph) is a planar graph with $\pn(T_2) = 3$, which remains until today the best known lower bound.
  While $\pn_\ell(T_2) = 2$, we show in \cref{sec:planars} that $\pn_\ell(T_9) = 3$, while we leave it as an open problem to improve on the bounds
  \[
   3 \leq \max_{G \text{ planar}} \pn_\ell(G)  \leq \max_{G \text{ planar}} \pn_u(G) \leq \max_{G \text{ planar}} \pn(G) \leq 4.
  \]
  
  \item We have a similar open problem for $k$-trees, where we refer to the detailed discussion at the end of \cref{sec:tree-width}.
  \[
   k \leq \max_{G \text{ $k$-tree}} \pn_\ell(G) \leq \max_{G \text{ $k$-tree}} \pn_u(G) \leq \max_{G \text{ $k$-tree}} \pn(G) = \begin{cases}k & \text{if } k \leq 2\\ k+1 & \text{if } k \geq 3\end{cases}
  \]
\end{itemize}

Besides determining the local and union page numbers for other graph classes (like for example regular graphs), it is also interesting to further analyze the relation between $\pn_\ell(G), \pn_u(G), \arb(G)$ and $\sa(G)$.
For example, what is the maximum of $\pn_u(G) / \pn_\ell(G)$ over all graphs $G$?

Finally, let us mention that changing the non-crossing condition \cref{eq:non-crossing} underlying the notion of book embeddings to for example a non-nesting condition, we get local and union versions of queue numbers.
Interestingly, the proof of \cref{thm:bounds-from-stars} remains valid and so does \cref{cor:planar}, giving that every planar graph has local queue number at most~$4$ and union queue number at most~$5$.

\bibliographystyle{splncs04}
\bibliography{lit}

\end{document}